\theoremstyle{plain}
\newtheorem{theorem}{Theorem}
\newtheorem{lemma}{Lemma}
\newtheorem{corollary}{Corollary}
\theoremstyle{definition}
\newtheorem{definition}{Definition}
\newtheorem{example}{Example}
\newtheorem{remark}{Remark}
\newcommand\blfootnote[1]{%
	\begingroup
	\renewcommand\thefootnote{}\footnote{#1}%
	\addtocounter{footnote}{-1}%
	\endgroup
}
\begin{document}	

\title{Finite time stability of tempered fractional systems with time delays\blfootnote{This 
is a preprint version of the paper published open access in 
\emph{Chaos Solitons Fractals}.}}

\author{Hanaa Zitane$^{1}$\\
\texttt{h.zitane@ua.pt} 
\and Delfim F. M. Torres$^{1,2,}$\thanks{Corresponding author: \texttt{delfim@ua.pt}}\\
\texttt{delfim@ua.pt}; \texttt{delfim@unicv.cv}}

\date{$^{1}$Center for Research and Development in Mathematics and Applications (CIDMA),
Department of Mathematics, University of Aveiro, 3810-193 Aveiro, Portugal\\[0.3cm]
$^{2}$Research Center in Exact Sciences (CICE), 
Faculty of Sciences and Technology (FCT), 
University of Cape Verde (Uni-CV), 
7943-010 Praia, Cape Verde}

\maketitle	

	
\begin{abstract}
We investigate the notion of finite time stability 
for tempered fractional systems (TFSs) with time delays 
and variable coefficients. Then, we examine some sufficient 
conditions that allow concluding the TFSs
stability in a finite time interval, which include the nonhomogeneous 
and the homogeneous delayed cases.  We present two different approaches. 
The first one is based on H\"older's and Jensen's inequalities, 
while the second one concerns the Bellman--Gr\"onwall method 
using the tempered Gr\"onwall inequality.  
Finally, we provide two numerical examples to show the 
practicability of the developed procedures.

\medskip

\noindent \textbf{Keywords:} 
finite time stability, 
tempered fractional derivatives, 
nonlinear systems, 
systems with time delays. 

\medskip

\noindent \textbf{2020 Mathematics Subject Classification}: 
26A33, 34A08, 34A34, 34D20, 34K20.
\end{abstract}


\section{Introduction}

In the last decade, fractional calculus has gained increasing interests 
due to its crucial and efficient role in modeling various real world phenomena 
in different fields of science and engineering \cite{mittag,MR2984893,MR1219954}.
Fractional calculus involves the operation of convolution with a power law function. 
If one multiplies the fractional derivative and integral by an exponential term, 
the result will be a tempered fractional derivative (TFD) and integral \cite{Meerschaert,MR3342453}, 
which provide undeviating generalization to the existing Caputo and Riemann--Liouville fractional operators 
and have many merits, both mathematically and practically. A truncated L\'{e}vy flight was investigated 
to capture the natural cutoff in real physical systems \cite{MR1303317}. Without a sharp cutoff, 
the tempered L\'{e}vy flight was studied as a smoother alternative \cite{AE:5}. 
Cartea and del-Castillo-Negrete \cite{AE:6} explored the tempered fractional diffusion equation 
by the tempered L\'{e}vy flight. Furthermore, stochastic applications, such as tempered L\'{e}vy flights, 
present a complete set of statistical physics and numerical analysis tools including solving 
multi-dimensional partial differential equations \cite{Li2}. TFD can be also found in geophysics 
\cite{Meerschaert2}, Brownian motion \cite{Meerschaert3}, and so on.
For further information about tempered fractional calculus and its applications, 
we refer the reader to \cite{AE:6,Fernandez,Meerschaert} and references therein.

As in classical calculus, stability is still one of the most extensively 
studied subjects in control theory and fractional systems analysis 
\cite{Agarwal}. The stability analysis 
of fractional differential equations with the TFD
is at its initial stage. Recently, some stability results of tempered
fractional systems (TFSs) have been established in \cite{Deng, Gassara, Gu}. 
In \cite{Deng}, sufficient conditions that ensure the Mittag-Leffler stability of TFSs 
are investigated by means of a tempered fractional comparison principle and the extended 
Lyapunov direct method. Further, in \cite{Gassara}, the Lyapunov approach is applied 
to analyze the generalized practical Mittag-Leffler stability of a class of fractional 
nonlinear systems evolving TFDs. Moreover, in \cite{Gu}, the asymptotic and Mittag-Leffler 
stability of tempered fractional neural networks, with and without delay, 
are studied by using the Banach fixed point theorem.

Here, we consider the stability from a non-Lyapunov 
point of view, precisely, finite time stability. 
This approach concerns the system stability and, simultaneously, the bounds 
of its trajectories. In fact, a dynamical system could be stable but 
still entirely useless because of undesirable transient performances. 
Then, it may be important to consider the stability of such systems 
with regard to certain subsets of the state-space that are defined, a priori, 
in a given problem. From the engineering point of view, the boundedness properties 
of the system responses are very interesting. For example, constraining the state 
of a system in a transient regime to not exceed certain limits, to avoid 
saturations and excessive excitations of vital parts and nonlinear dynamics. 
In addition, it is of particular significance to analyze the behavior of dynamical 
systems over a finite time interval, especially when the systems 
lifetime is finite \cite{MR2288960,MR3229176,Moulay}.

To the best of our knowledge, the finite time stability problem of TFSs, 
including time delay ones, has not yet been analyzed. This motivated us to write the current paper. 
Therefore, in this work we propose to study the finite time stability for nonlinear TFSs with variable 
coefficients and time delays. Compared to \cite{Gu}, here we study the finite time 
stability concept of a more general class of nonlinear tempered delayed systems 
and also by means of two different methods that were not used yet for the stability 
of any class of tempered systems \cite{Deng,Gassara,Gu}. Our first method concerns 
the Bellman--Gr\"onwall approach using the tempered Gr\"onwall inequality. 
The second one is based on H\"older's and Jensen's inequalities. 
It is important to mention that the problem of finite time stability 
of fractional-order time systems, including the delayed cases, is widely investigated 
in the literature using different approaches \cite{Arthia,Lazarevi,LiWang,MaWu}. 
For example, based on Mittag-Leffler functions and the generalized Gr\"onwall
inequality, sufficient conditions 
that ensure the finite time stability of Caputo fractional order nonlinear systems 
with damping behavior are derived in \cite{Arthia}. Moreover, a finite time stability 
test procedure is presented for linear nonhomogeneous fractional time-delay systems 
though the Bellman-Gr\"onwall approach \cite{Lazarevi}. Also, the stability in the finite range 
of time for Caputo linear fractional delayed systems is studied by means of a delayed 
Mittag-Leffler type matrix \cite{LiWang}, while in \cite{MaWu}, criteria of 
finite time stability of Hadamard fractional differential linear and nonlinear equations 
in weighted Banach spaces are established using the method of successive approximations 
and Beesack's inequality with a weakly singular kernel.

The paper is organized as follows. In Section~\ref{sec:2}, we recall
necessary notions and results from the literature that will be useful 
in the sequel. Our original results are then given in Section~\ref{sec:3}:
we first establish the existence result for a class of nonlinear delayed TFSs, 
then we prove a delay-dependent- (Theorem~\ref{theoa})
and a delay-independent-criterion (Theorem~\ref{theob})
for the finite time stability of time delay nonhomogeneous TFSs
with variable coefficients. The usefulness of the proved criteria 
is illustrated in Section~\ref{sec:4} with two examples. We end with
Section~\ref{sec:5} of conclusions, pointing out some
possible future directions of research.


\section{Preliminaries}
\label{sec:2}

Tempered fractional operators, as we know them today, 
appear to have been introduced in \cite{AE:6}. 
However, other notions of ``tempered'' derivatives 
can be referred back to the seventies 
of the 20th century \cite{MR0365130}.
In this section, we first state some definitions 
and fundamental lemmas related to the tempered fractional order 
operators that are employed throughout this paper. 
For a more general tempered fractional calculus, 
that includes and generalizes what are usually called
substantial, tempered, and shifted fractional operators,
we refer to \cite{MR4279844}, where the reader
can also find a discussion if the tempered fractional derivative 
can be considered as a class of fractional derivatives or not
(see Section~3.4 of \cite{MR4279844}).

\begin{definition}[See \cite{Li,Meerschaert}]
Let $\alpha>0$, $\rho>0$ and $v$ be an absolutely integrable 
function defined on $[a,b]$, $a, b \in \mathbb{R}$, and $a<b$
(if $b=\infty$, then the interval is half-open). 
The tempered fractional integral of the function $v$ is defined as follows:
$$
{}^T\!I_{a}^{\alpha,\rho}v(t)=\dfrac{1}{\Gamma(\alpha)}
\int_{a}^{t}e^{-\rho(t-s)}(t-s)^{\alpha-1}v(s)\, \mathrm{d}s,
$$
where $\Gamma(\cdot)$ is the Euler Gamma function \cite{mittag} defined by
$$
\Gamma(r)= \displaystyle \int_{0}^{\infty}e^{-s}s^{r-1} 
\, \mathrm{d}s,\quad r\in\mathbb{C}.
$$	
\end{definition}

\begin{definition}[See \cite{Li,Meerschaert}] 
\label{GPF}
Let $\alpha\in(0,1)$ and $\rho>0$. The Caputo tempered fractional 
order derivative  of a function $v \in C^{1}([a,b], \mathbb{R})$ 
is given by
$$
{}^T\! D_{a}^{\alpha,\rho}v(t)=\dfrac{1}{\Gamma(1-\alpha)}
\int_{a}^{t}e^{-\rho(t-s)}(t-s)^{-\alpha}D^{1,\rho}v(s)\, \mathrm{d}s
$$
with 
$D^{1,\rho}v(t)=\rho v(t)+ dv'(t)$,
where $d=1$ and its dimension equals the dimension 
of the independent variable $t$.
\end{definition}

\begin{remark}
If $v(t)$ stands for some chemical, geometrical or physical 
quantity as a function of time $t$, 
for example, concentration, coordinate or momentum, then its combination 
with its derivative $v'(t)$ does not preserve the correct dimensions. 
In order to ensure the dimensional homogeneity within 
the combination $\rho v(t)+ dv'(t)$, we added the constant $d = 1$ 
with a dimension equal to the dimension of the independent variable $t$.
\end{remark}
 
\begin{remark}
In the case $\rho=0$, the TFD coincides with the 
left Caputo fractional derivative \cite{mittag}.	
\end{remark}

We have the following Gr\"onwall inequality 
in the framework of tempered fractional integral.

\begin{lemma}[Tempered Gr\"onwall inequality \cite{Gronwall}]
\label{Gronwall}
Suppose $\alpha>0$, $\rho>0$, $g$ and $f$ are nonnegative 
and locally integrable functions on $[0,t_{f})$ $(t_{f}\leq \infty)$ 
and $h$ is a nonnegative, nondecreasing,
and continuous function on $[0,t_{f})$ satisfying 
$h(t) \leq L$, where $L$ is a constant. Moreover, if
$$
g(t)\leq f(t)+ h(t)\displaystyle 
\int_{0}^{t}e^{-\rho(t-s)}(t-s)^{\alpha-1}g(s) \, \mathrm{d}s,
$$
then
$$
g(t)\leq f(t)+\displaystyle \int_{0}^{t} \left[
\sum_{n=1}^{+\infty}\dfrac{\left(h(t)\Gamma(\alpha)\right)^{n}}{\Gamma(n\alpha)}
e^{-\rho(t-s)}(t-s)^{n\alpha-1}f(s)\right]\, \mathrm{d}s,
\quad t\in[0,t_{f}].
$$
If, in addition, function $f$ is nondecreasing on $[0, t_{f})$, then
\begin{equation}
\label{GrenwalMittga}
g(t)\leq f(t)E_{\alpha}(h(t)\Gamma(\alpha)t^{\alpha}),
\quad t\in[0,t_{f}],
\end{equation}
where $E_{\alpha}(\cdot)$ is the Mittag-Leffler 
function of one parameter \cite{mittag} defined by 
$$
E_{\alpha}(s)=\displaystyle
\sum_{n=0}^{+\infty}\dfrac{s^{n}}{\Gamma(\alpha n+1)},
\quad s\in\mathbb{C}.
$$
\end{lemma}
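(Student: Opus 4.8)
The plan is to run the classical iteration scheme for Gr\"onwall inequalities with weakly singular kernels, adapted to the tempering factor. First I would introduce the monotone linear operator
$$
(\mathcal{B}\phi)(t)=h(t)\int_{0}^{t}e^{-\rho(t-s)}(t-s)^{\alpha-1}\phi(s)\,\mathrm{d}s,
$$
so that the hypothesis reads $g\le f+\mathcal{B}g$. Because the kernel $e^{-\rho(t-s)}(t-s)^{\alpha-1}$ and $h$ are nonnegative, $\mathcal{B}$ preserves the order between nonnegative functions; applying it repeatedly to the hypothesis and using linearity gives
$$
g(t)\le \sum_{k=0}^{n-1}(\mathcal{B}^{k}f)(t)+(\mathcal{B}^{n}g)(t),\qquad t\in[0,t_{f}),
$$
for every $n\ge 1$, where $\mathcal{B}^{0}f=f$.

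The crux is an explicit bound for the iterated kernel, which I would prove by induction on $n$: for every nonnegative $\phi$,
$$
(\mathcal{B}^{n}\phi)(t)\le \frac{\left(h(t)\Gamma(\alpha)\right)^{n}}{\Gamma(n\alpha)}\int_{0}^{t}e^{-\rho(t-s)}(t-s)^{n\alpha-1}\phi(s)\,\mathrm{d}s.
$$
In the inductive step I would substitute this bound into $\mathcal{B}^{n+1}\phi=\mathcal{B}(\mathcal{B}^{n}\phi)$, use that $h$ is nondecreasing to replace $h(\tau)$ by $h(t)$, and interchange the order of integration over $\{0\le s\le \tau\le t\}$ (justified by Tonelli, as all integrands are nonnegative). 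Two features make the tempered case close cleanly: the tempering exponentials compose, $e^{-\rho(t-\tau)}e^{-\rho(\tau-s)}=e^{-\rho(t-s)}$, so the weight $e^{-\rho(t-s)}$ reappears intact; and the inner integral $\int_{s}^{t}(t-\tau)^{\alpha-1}(\tau-s)^{n\alpha-1}\,\mathrm{d}\tau$ becomes, after the substitution $\tau=s+(t-s)u$, the Beta integral equal to $(t-s)^{(n+1)\alpha-1}\Gamma(\alpha)\Gamma(n\alpha)/\Gamma((n+1)\alpha)$. The Gamma factors then telescope into exactly the constant for the index $n+1$. I expect this Beta-function bookkeeping, together with the Tonelli justification of the interchange, to be the main technical hurdle.

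With the kernel bound established, I would dispose of the remainder $\mathcal{B}^{n}g$. Using $h\le L$, $e^{-\rho(t-s)}\le 1$, and $(t-s)^{n\alpha-1}\le t^{n\alpha-1}$ once $n\alpha\ge 1$, one gets $(\mathcal{B}^{n}g)(t)\le \frac{(L\Gamma(\alpha))^{n}t^{n\alpha-1}}{\Gamma(n\alpha)}\int_{0}^{t}g(s)\,\mathrm{d}s$, and since $\Gamma(n\alpha)$ outgrows any power the prefactor $(L\Gamma(\alpha)t^{\alpha})^{n}/\Gamma(n\alpha)$ tends to $0$; as $g$ is locally integrable, $(\mathcal{B}^{n}g)(t)\to 0$ pointwise on $[0,t_{f})$. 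Letting $n\to\infty$ and swapping the resulting sum with the integral (again by Tonelli) yields the first conclusion.

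For the second conclusion I would assume $f$ nondecreasing, so $f(s)\le f(t)$ may be pulled out of the series, and bound $e^{-\rho(t-s)}\le 1$. Evaluating $\int_{0}^{t}(t-s)^{n\alpha-1}\,\mathrm{d}s=t^{n\alpha}/(n\alpha)$ and using $n\alpha\,\Gamma(n\alpha)=\Gamma(n\alpha+1)$, the series collapses to $\sum_{n\ge 1}\left(h(t)\Gamma(\alpha)t^{\alpha}\right)^{n}/\Gamma(n\alpha+1)=E_{\alpha}\!\left(h(t)\Gamma(\alpha)t^{\alpha}\right)-1$, whence $g(t)\le f(t)E_{\alpha}\!\left(h(t)\Gamma(\alpha)t^{\alpha}\right)$, as claimed.
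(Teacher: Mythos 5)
Your proof is correct. Note that the paper itself contains no proof of this lemma---it is imported verbatim, with citation, from \cite{Gronwall}---but your argument (iterating the monotone operator $\mathcal{B}$, the inductive kernel bound closed by the Beta integral $\int_{s}^{t}(t-\tau)^{\alpha-1}(\tau-s)^{n\alpha-1}\,\mathrm{d}\tau=(t-s)^{(n+1)\alpha-1}\Gamma(\alpha)\Gamma(n\alpha)/\Gamma((n+1)\alpha)$, the vanishing remainder $\mathcal{B}^{n}g\to 0$, and the collapse to the Mittag-Leffler series via $n\alpha\,\Gamma(n\alpha)=\Gamma(n\alpha+1)$ when $f$ is nondecreasing) is exactly the standard proof given in that reference, the tempered analogue of the classical Ye--Gao--Ding scheme, with the tempering handled precisely as you do: the exponentials compose as $e^{-\rho(t-\tau)}e^{-\rho(\tau-s)}=e^{-\rho(t-s)}$ inside the iteration and are bounded by $1$ at the final step.
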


We end this section by stating two inequalities that will 
be used to prove the finite time stability of nonlinear TFSs.

\begin{lemma}[H\"older's inequality \cite{Holder}]
\label{Holder}
Let $g,q>1$ such that $\dfrac{1}{g}+\dfrac{1}{q}=1$. 
If $|h(\cdot)|^{g},|k(\cdot)|^{q}\in L^{1}(M)$, 
then $h(\cdot)k(\cdot)\in L^{1}(M)$ and 
$$
\displaystyle \int_{M} |h(r)k(r)|\, \mathrm{d}r
\leq \left( \displaystyle \int_{M}   
|h(r)|^{g} \mathrm{d}r\right)^{\dfrac{1}{g}}
\left( \displaystyle \int_{M} 
|k(r)|^{q}\mathrm{d}r\right)^{\dfrac{1}{q}},
$$ 
where $L^{1}(M)$ designs the Banach space of all Lebesgue measurable 
functions $h:M\longrightarrow \mathbb{R}$ with 
$$
\displaystyle \int_{M}   |h(r)|^{p} \mathrm{d}r<\infty.
$$
\end{lemma}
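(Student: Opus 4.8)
The plan is to reduce this integral inequality to a pointwise algebraic inequality — Young's inequality — and then integrate after a suitable normalization. Concretely, I would first establish that for any nonnegative reals $a,b$ and conjugate exponents $g,q>1$ with $\frac{1}{g}+\frac{1}{q}=1$ one has
$$
ab\leq \frac{a^{g}}{g}+\frac{b^{q}}{q}.
$$

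To prove this auxiliary inequality I would exploit the concavity of the logarithm (equivalently, the convexity of the exponential). If either $a=0$ or $b=0$ the bound is immediate, so assume $a,b>0$. Since $\log$ is concave and $\frac{1}{g}+\frac{1}{q}=1$, applying concavity to the two points $a^{g}$ and $b^{q}$ with weights $\frac{1}{g}$ and $\frac{1}{q}$ gives
$$
\log\left(\frac{a^{g}}{g}+\frac{b^{q}}{q}\right)\geq \frac{1}{g}\log\left(a^{g}\right)+\frac{1}{q}\log\left(b^{q}\right)=\log a+\log b=\log(ab),
$$
and exponentiating both sides yields the claim.

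Next I would normalize. Set $A=\left(\int_{M}|h(r)|^{g}\,\mathrm{d}r\right)^{1/g}$ and $B=\left(\int_{M}|k(r)|^{q}\,\mathrm{d}r\right)^{1/q}$, both finite by hypothesis. If $A=0$ then $h=0$ almost everywhere, so $hk=0$ almost everywhere and both sides of the desired inequality vanish; the case $B=0$ is symmetric. Hence I may assume $A,B>0$ and apply Young's inequality pointwise with $a=|h(r)|/A$ and $b=|k(r)|/B$, obtaining
$$
\frac{|h(r)k(r)|}{AB}\leq \frac{1}{g}\,\frac{|h(r)|^{g}}{A^{g}}+\frac{1}{q}\,\frac{|k(r)|^{q}}{B^{q}}.
$$

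Finally, I would integrate this bound over $M$. By the definitions of $A$ and $B$, the right-hand side integrates to $\frac{1}{g}+\frac{1}{q}=1$, which shows that $\int_{M}|h(r)k(r)|\,\mathrm{d}r\leq AB<\infty$; this simultaneously establishes that $hk\in L^{1}(M)$ and that the stated inequality holds. The only genuinely delicate points are the justification of Young's inequality via concavity and the careful handling of the degenerate cases $A=0$ or $B=0$; once these are settled, the remainder is a direct application of the monotonicity and linearity of the integral.
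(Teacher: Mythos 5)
Your proof is correct: it is the classical derivation of H\"older's inequality from Young's inequality $ab \leq \frac{a^{g}}{g}+\frac{b^{q}}{q}$ (itself obtained from concavity of the logarithm), followed by the standard normalization $a=|h(r)|/A$, $b=|k(r)|/B$ and integration, with the degenerate cases $A=0$ or $B=0$ handled properly. Note that the paper itself gives no proof of this lemma --- it is quoted as a known result with a citation to Beckenbach and Bellman's \emph{Inequalities} --- so there is no internal argument to compare against; your write-up simply supplies the standard justification that the paper takes for granted, and it does so without gaps.
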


\begin{lemma}[Jensen's inequality \cite{Kuczma}]
\label{Jensen}	
Let $n\in \mathbb{N}$ and $m_{1}, m_{2},\ldots,m_{n}$ 
be nonnegative real numbers. Then,
$$
\left(\sum_{k=1}^{n}m_{k}\right)^{p}\leq n^{p-1} \sum_{k=1}^{n}m^{p}_{k},
\quad~ \text{ for }~ p\geq 0.
$$
\end{lemma}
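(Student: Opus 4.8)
The plan is to recognize Lemma~\ref{Jensen} as the discrete power-mean inequality and to obtain it from the convexity of the map $\phi(x)=x^{p}$. First I would fix the exponent in the range $p\geq 1$, where $\phi$ is convex on $[0,\infty)$, and apply the classical (weighted) finite Jensen inequality with the uniform weights $\lambda_{k}=1/n$ and the points $x_{k}=m_{k}$. This yields
$$
\left(\frac{1}{n}\sum_{k=1}^{n}m_{k}\right)^{p}
\leq \frac{1}{n}\sum_{k=1}^{n}m_{k}^{p}.
$$
Multiplying both sides by $n^{p}$ and using $n^{p}\cdot n^{-1}=n^{p-1}$ gives exactly the claimed estimate. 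The degenerate cases are immediate: for $p=1$ both sides equal $\sum_{k}m_{k}$, and for $p=0$ both sides reduce to $1$ under the usual conventions, so the genuine content is the strict inequality for $p>1$.

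A second, self-contained route avoids invoking the abstract Jensen inequality and instead uses H\"older's inequality (Lemma~\ref{Holder}), which is already available. Writing each summand as $m_{k}\cdot 1$ and applying Lemma~\ref{Holder} over the counting measure on $\{1,\dots,n\}$ with the conjugate exponents $p$ and $q=p/(p-1)$, I would obtain
$$
\sum_{k=1}^{n}m_{k}
\leq \left(\sum_{k=1}^{n}m_{k}^{p}\right)^{1/p}
\left(\sum_{k=1}^{n}1\right)^{1/q}
=\left(\sum_{k=1}^{n}m_{k}^{p}\right)^{1/p}\, n^{1-1/p}.
$$
Raising to the $p$-th power and noting $p(1-1/p)=p-1$ reproduces the inequality. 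This derivation again forces $p>1$, so that the conjugate exponent $q$ is well defined and positive.

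The step I expect to be the genuine obstacle is not the algebra but the \emph{admissible range of $p$}. Convexity of $x\mapsto x^{p}$, and hence the stated direction of the inequality, holds only for $p\geq 1$; for $0<p<1$ the function is concave and the inequality reverses. For instance, with $n=2$, $m_{1}=1$ and $m_{2}=0$, the stated bound would read $1\leq 2^{p-1}<1$, a contradiction. Thus the hypothesis ``$p\geq 0$'' must in practice be read as $p\geq 1$, which is precisely the regime in which the lemma is later applied. If one prefers an argument fully independent of any abstract form of Jensen's inequality, one can instead establish the finite convexity inequality $\phi\bigl(\tfrac{1}{n}\sum x_{k}\bigr)\leq\tfrac{1}{n}\sum\phi(x_{k})$ for convex $\phi$ directly by induction on $n$, the nontrivial seed being the two-point estimate $\phi\bigl(\tfrac{a+b}{2}\bigr)\leq\tfrac{1}{2}\bigl(\phi(a)+\phi(b)\bigr)$, and then specialize to $\phi(x)=x^{p}$ with $p\geq 1$.
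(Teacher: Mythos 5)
The paper offers no proof of this lemma at all: it is imported verbatim from the literature (the cited book of Kuczma) and used as a black box, so there is nothing internal to compare your argument against. Both of your routes are correct and standard for $p\geq 1$ --- the first via convexity of $x\mapsto x^{p}$ and the finite Jensen inequality with uniform weights $1/n$, the second via H\"older's inequality over counting measure with conjugate exponents $p$ and $p/(p-1)$; either one is a legitimate self-contained proof, and the second has the merit of relying only on Lemma~\ref{Holder}, which the paper already assumes. More importantly, your remark on the admissible range of $p$ is a genuine correction to the statement as printed: with ``$p\geq 0$'' the inequality is \emph{false} for $0<p<1$, since $x\mapsto x^{p}$ is then concave and the inequality reverses, and your counterexample ($n=2$, $m_{1}=1$, $m_{2}=0$, where the claim would force $1\leq 2^{p-1}<1$) settles this conclusively; only the endpoint cases $p=0$ and $p=1$ survive as equalities. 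Fortunately the flaw is harmless for the paper's results: the only place Lemma~\ref{Jensen} is invoked is in the proof of Theorem~\ref{theoa}, with $n=3$ and exponent $p=q=1+1/\alpha>1$ (yielding the factor $3^{q-1}=3^{1/\alpha}$), which is precisely the regime $p\geq 1$ in which your proof is valid.
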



\section{Main Results}
\label{sec:3}

In this section, we shall analyze the finite time stability of the following 
class of time delay nonhomogeneous TFS with variable coefficients:
\begin{equation}
\label{system2}
\left\{
\begin{array}{ll}
{}^T\! D_{0}^{\alpha,\rho}y(t)
=e^{-\rho t}(Ay(t)+By(t-\tau)+f(t,y(t),y(t-\tau))), 
& t\in[0,T],\\
y(t)=\omega(t),  & t\in[-\tau,0],
\end{array}
\right.
\end{equation}
and the associated homogeneous TFS
\begin{equation}
\label{system1}
\left\{
\begin{array}{ll}
{}^T\! D_{0}^{\alpha,\rho}y(t)
=e^{-\rho t}(Ay(t)+By(t-\tau)), & t\in[0,T],\\
y(t)=\omega(t),  & t\in[-\tau,0],
\end{array}
\right.
\end{equation}
where $\alpha\in(0,1)$, $\rho \in(0,1]$, $A$ and $B$ are constant $n\times n$ matrices, 
$T>0$ is a real number, $\tau>0$ is a time delay, 
$\omega(\cdot)$ is a continuous function on $[-\tau,0]$ with the norm 
$\Arrowvert \omega \Arrowvert_{C}= \underset{t \in [-\tau,0]}{\sup} 
\Arrowvert \omega (t)\Arrowvert$ such that $\Arrowvert \cdot \Arrowvert$ is the maximum norm, 
and $f: [0,T]\times\mathbb{R}^{n}\times\mathbb{R}^{n}\longrightarrow \mathbb{R}^{n}$  
is a given nonlinear continuous function with $f(t,0,0)=0$.
 
We present the following definition of the finite 
time stability of system \eqref{system2}.

\begin{definition}
\label{def1:stab}
The system \eqref{system2} is finite time stable with regard 
to $\{\xi, \varepsilon, J\}$, $\xi\leq\varepsilon$, if 
\begin{equation}
\label{Cnd1}
\Arrowvert \omega \Arrowvert_{C} < \xi
\end{equation}
implies
\begin{equation}
\label{Cnd2}
\Arrowvert y(t) \Arrowvert < \varepsilon, \quad \forall t \in J,
\end{equation}
with $\xi$ and $\varepsilon$ positive real numbers; and 
$J$ is the time interval $J=[0,T]\subset\mathbb{R}$.
\end{definition}

\begin{remark}
When the system~\eqref{system2} is of order $\rho=0$, 
we retrieve the finite time stability definition 
of Caputo fractional delayed systems \cite{Lazarevi}.
\end{remark}

\begin{remark}
Intuitively, by defining two
bounded sets in the state space, called ``the initial set'' 
and ``the set of trajectories'',
the system is finite time stable if the trajectories of the system 
emanating from the initial set remain in the set of trajectories  
over a finite time interval.
\end{remark}

Existence and uniqueness of solution for the nonlinear tempered 
fractional delayed system~\eqref{system2} is stated by the following lemma.

\begin{lemma}
\label{Lemma:1}
Let $\alpha\in(0,1)$, $\rho \in(0,1]$, 
$f \in C([0, T]\times\mathbb{R}^{n}\times\mathbb{R}^{n}, \mathbb{R}^{n})$ and 
$\omega \in C([-\tau, 0], \mathbb{R}^{n})$.
The function $y : [-\tau, T]\longrightarrow\mathbb{R}^{n}$ 
is a solution of system~\eqref{system2} if, and only if, it satisfies
\begin{equation}
\label{system-2}
\left\{
\begin{array}{ll}
y(t)=\omega(0)e^{-\rho t}+{}^T\! I_{0}^{\alpha,\rho}
e^{-\rho s}\left[  Ay(t)+By(t-\tau)+f(t,y(t),y(t-\tau))\right], 
& t\in[0,T],\\
y(t)=\omega(t),  & t\in[-\tau,0].
\end{array}
\right.
\end{equation}
Moreover, if for any functions 
$y,z : [-\tau, T]\longrightarrow\mathbb{R}^{n}$, 
there exists a constant $L_{f}>0$ such that
\begin{equation}
\label{condLipch}
\Arrowvert f(t,y(t),y(t-\tau))- f(t,z(t),z(t-\tau))  \Arrowvert 
\leq L_{f}  \left(\Arrowvert  y(t)-z(t) \Arrowvert 
+ \Arrowvert  y(t-\tau)-z(t-\tau) \Arrowvert
\right),\quad t\in[0,T],
\end{equation}
then system~\eqref{system2} has a unique mild solution.
\end{lemma}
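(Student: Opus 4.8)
The plan is to prove the two assertions in turn: first the equivalence between the differential system \eqref{system2} and the integral equation \eqref{system-2}, and then, under the Lipschitz hypothesis \eqref{condLipch}, the existence and uniqueness of a solution via a contraction argument on the associated integral operator.

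For the equivalence, the key tool I would exploit is the exponential conjugation relating the tempered operators to the classical Riemann--Liouville and Caputo ones. Setting $z(t)=e^{\rho t}y(t)$, a direct computation from the definitions yields ${}^T\!I_{0}^{\alpha,\rho}v(t)=e^{-\rho t}\,I_{0}^{\alpha}\bigl(e^{\rho\cdot}v\bigr)(t)$ and ${}^T\!D_{0}^{\alpha,\rho}y(t)=e^{-\rho t}\,{}^C\!D_{0}^{\alpha}z(t)$, the latter because $D^{1,\rho}v(s)=e^{-\rho s}\frac{d}{ds}\bigl(e^{\rho s}v(s)\bigr)$. Combining these with the classical relation $I_{0}^{\alpha}\,{}^C\!D_{0}^{\alpha}z=z-z(0)$ gives the tempered fundamental identity
$$
{}^T\!I_{0}^{\alpha,\rho}\,{}^T\!D_{0}^{\alpha,\rho}y(t)=y(t)-y(0)e^{-\rho t}.
$$
Applying ${}^T\!I_{0}^{\alpha,\rho}$ to \eqref{system2} and using $y(0)=\omega(0)$ then produces \eqref{system-2}; for the converse, I would apply ${}^T\!D_{0}^{\alpha,\rho}$ to \eqref{system-2}, invoking ${}^T\!D_{0}^{\alpha,\rho}\,{}^T\!I_{0}^{\alpha,\rho}v=v$ (again by conjugation, from ${}^C\!D_{0}^{\alpha}I_{0}^{\alpha}=\mathrm{Id}$) together with ${}^T\!D_{0}^{\alpha,\rho}\bigl(\omega(0)e^{-\rho\cdot}\bigr)=0$, since $e^{\rho t}\cdot\omega(0)e^{-\rho t}$ is constant. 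This step is essentially bookkeeping once the conjugation identities are established.

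For existence and uniqueness, I would work on the complete metric space $X=\{y\in C([-\tau,T],\mathbb{R}^{n}):y|_{[-\tau,0]}=\omega\}$ with the supremum norm, and define the operator $\mathcal{T}$ by $(\mathcal{T}y)(t)=\omega(t)$ on $[-\tau,0]$ and by the right-hand side of \eqref{system-2} on $[0,T]$, so that its fixed points are exactly the mild solutions. For $y,z\in X$ (which agree on $[-\tau,0]$, so their difference and the difference of their delayed arguments vanish there), I would estimate, using the matrix bounds $\|A\|,\|B\|$, the Lipschitz condition \eqref{condLipch}, and the cancellation $e^{-\rho(t-s)}e^{-\rho s}=e^{-\rho t}\le 1$,
$$
\bigl\|(\mathcal{T}y)(t)-(\mathcal{T}z)(t)\bigr\|\le\frac{C}{\Gamma(\alpha)}\int_{0}^{t}(t-s)^{\alpha-1}\,\bigl\|y-z\bigr\|_{\infty}\,\mathrm{d}s=\frac{C\,t^{\alpha}}{\Gamma(\alpha+1)}\bigl\|y-z\bigr\|_{\infty},
$$
with $C=\|A\|+\|B\|+2L_{f}$.

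The main obstacle is that this one-step estimate forces a contraction only for small $T$, so I would not stop here. Iterating $\mathcal{T}$ and controlling the resulting convolution kernels, I expect the induction
$$
\bigl\|(\mathcal{T}^{k}y)(t)-(\mathcal{T}^{k}z)(t)\bigr\|\le\frac{(C\,t^{\alpha})^{k}}{\Gamma(k\alpha+1)}\bigl\|y-z\bigr\|_{\infty}\le\frac{(C\,T^{\alpha})^{k}}{\Gamma(k\alpha+1)}\bigl\|y-z\bigr\|_{\infty},
$$
where the inductive step rests on the Beta-integral $\int_{0}^{t}(t-s)^{\alpha-1}s^{k\alpha}\,\mathrm{d}s=t^{(k+1)\alpha}\,\Gamma(\alpha)\Gamma(k\alpha+1)/\Gamma((k+1)\alpha+1)$, which telescopes the Gamma factors exactly. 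Since $\sum_{k\ge 0}(C\,T^{\alpha})^{k}/\Gamma(k\alpha+1)=E_{\alpha}(C\,T^{\alpha})<\infty$, the coefficient tends to $0$, so some iterate $\mathcal{T}^{k}$ is a contraction; the generalized Banach fixed point theorem then delivers a unique fixed point of $\mathcal{T}$ on the whole of $[0,T]$, i.e. a unique mild solution, with no restriction on $T$. The delicate points are precisely this kernel induction and the bookkeeping of the delayed term, which is harmless because the delayed differences are dominated by $\|y-z\|_{\infty}$ and vanish on $[-\tau,0]$.
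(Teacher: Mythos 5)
Your proposal is correct, but it takes a genuinely different route from the paper, which offers no self-contained argument at all: the paper's entire proof is a one-sentence deferral to the techniques of Lemma~1 and Theorem~2 of \cite{SolGPFD}, combined with Definition~\ref{GPF} and the tempered Gr\"onwall inequality (Lemma~\ref{Gronwall}). In that intended route, the equivalence with the integral form \eqref{system-2} follows from operator identities for proportional/tempered fractional operators, and uniqueness is obtained Gr\"onwall-style: the difference of two solutions satisfies a homogeneous tempered integral inequality, so Lemma~\ref{Gronwall} forces it to vanish identically. You replace both ingredients. For the equivalence you use the conjugation $z(t)=e^{\rho t}y(t)$, which reduces ${}^T\!D_{0}^{\alpha,\rho}$ and ${}^T\!I_{0}^{\alpha,\rho}$ to the classical Caputo derivative and Riemann--Liouville integral and lets you import the classical fundamental theorem of fractional calculus; this is in fact the mechanism implicitly underlying the cited reference, made explicit and checkable. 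For existence and uniqueness you use a Weissinger-type fixed point argument, with the Beta-integral induction producing the Mittag-Leffler coefficients $(C\,T^{\alpha})^{k}/\Gamma(k\alpha+1)\to 0$, so some iterate of the solution operator is a contraction on all of $[0,T]$ with no smallness restriction. Your route buys self-containedness and a constructive existence proof (the Gr\"onwall inequality by itself only yields uniqueness); the paper's route buys brevity and reuses Lemma~\ref{Gronwall}, which is needed later anyway. Two small points to make explicit if you write this up in full: state the regularity under which ${}^C\!D_{0}^{\alpha}I_{0}^{\alpha}w=w$ and $I_{0}^{\alpha}\,{}^C\!D_{0}^{\alpha}z=z-z(0)$ are invoked (matching the $C^{1}$ hypothesis in Definition~\ref{GPF}), and note that for $y,z$ in your space $X$ the norm $\Arrowvert y-z\Arrowvert_{\infty}$ over $[-\tau,T]$ coincides with the supremum over $[0,T]$ because both functions equal $\omega$ on $[-\tau,0]$ --- a fact your induction uses when bounding the delayed differences.
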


\begin{proof}
The proof follows by using similar techniques of Lemma~1 and Theorem~2 in \cite{SolGPFD} 
and by taking into account Definition~\ref{GPF} and the tempered 
Gr\"onwall inequality (Lemma~\ref{Gronwall}).
\end{proof}


\subsection{Time delay dependent criterion}

Here, we establish a delay-dependent criterion that enables us 
to check the finite time stability of the nonhomogeneous time delay TFSs
\eqref{system2}. Our proof uses H\"older's and Jensen's inequalities.

\begin{theorem}
\label{theoa}
Let $\xi, \varepsilon>0$ be given real numbers and consider 
$g=1+\alpha$ and $q=1+\dfrac{1}{\alpha}$.
The system \eqref{system2} is finite time stable with 
respect to $\{\xi, \varepsilon,J\}$, $\xi\leq\varepsilon$, 
if $f$ satisfies condition \eqref{condLipch} and
\begin{equation}
\label{cnd4}
^{q}\sqrt{\dfrac{3^{\frac{1}{\alpha}}q+(3^{\frac{1}{\alpha}}\varPsi+q\varPhi
+\varPsi\varPhi)e^{(\varPsi+q) t}}{q+\varPsi}}
\leq \dfrac{\varepsilon}{\xi}, \quad \forall t \in J,
\end{equation}
where
\begin{equation}
\label{Psi}
\varPsi=\dfrac{3^{\frac{1}{\alpha}}((\lambda_{\max}(A)+L_{f})^{q}
+(\lambda_{\max}(B)+L_{f})^{q}e^{-q\tau})V^{q}}{\Gamma^{q}(\alpha)},
\quad 
V=\left(\dfrac{\Gamma(\alpha^{2})}{g^{\alpha^{2}}}\right)^{1/g}
\end{equation}
and
\begin{equation}
\label{phi}
\varPhi=\dfrac{3^{\frac{1}{\alpha}}(\lambda_{\max}(B)
+L_{f})^{q}(1-e^{-\tau q})}{q\Gamma^{q}(\alpha)}V^{q}
\end{equation}
with $\lambda_{\max}(A)$ and $\lambda_{\max}(B)$ denoting 
the largest singular values of the matrices $A$ and $B$, respectively.
\end{theorem}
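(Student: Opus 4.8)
The plan is to reduce \eqref{system2} to a scalar Volterra integral inequality that the classical Bellman--Gr\"onwall argument can close, with H\"older's inequality (Lemma~\ref{Holder}) and Jensen's inequality (Lemma~\ref{Jensen}) as the two work‑horses, exactly as announced. First I would invoke Lemma~\ref{Lemma:1} to replace \eqref{system2} by its integral representation \eqref{system-2}. Writing out ${}^T\! I_{0}^{\alpha,\rho}$ and noting that the tempered kernel $e^{-\rho(t-s)}(t-s)^{\alpha-1}$ absorbs the factor $e^{-\rho s}$ coming from the right‑hand side, the solution reads $y(t)=e^{-\rho t}\omega(0)+\frac{1}{\Gamma(\alpha)}\int_0^t e^{-\rho(t-s)}(t-s)^{\alpha-1}(Ay(s)+By(s-\tau)+f(s,y(s),y(s-\tau)))\,\mathrm{d}s$. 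Passing to norms and using $f(t,0,0)=0$ with the Lipschitz bound \eqref{condLipch} and the singular‑value estimates $\Arrowvert Ay\Arrowvert\le\lambda_{\max}(A)\Arrowvert y\Arrowvert$, $\Arrowvert By\Arrowvert\le\lambda_{\max}(B)\Arrowvert y\Arrowvert$, I obtain an inequality for $\Arrowvert y(t)\Arrowvert$ built from three nonnegative pieces: the free term $e^{-\rho t}\Arrowvert\omega(0)\Arrowvert$, an $(\lambda_{\max}(A)+L_{f})$‑weighted integral in $\Arrowvert y(s)\Arrowvert$, and an $(\lambda_{\max}(B)+L_{f})$‑weighted integral in $\Arrowvert y(s-\tau)\Arrowvert$.

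Next I would raise this inequality to the power $q$ and apply Jensen's inequality (Lemma~\ref{Jensen}) with $n=3$, distributing the power over the three summands at the cost of the prefactor $3^{q-1}=3^{1/\alpha}$ visible throughout \eqref{Psi}--\eqref{phi}. To each integral piece I then apply H\"older's inequality (Lemma~\ref{Holder}) with the conjugate exponents $g=1+\alpha$ and $q=1+\frac{1}{\alpha}$, which indeed satisfy $\frac{1}{g}+\frac{1}{q}=1$, splitting the kernel from $\Arrowvert y\Arrowvert$. Since $(\alpha-1)g=\alpha^{2}-1$, the kernel factor becomes $(\int_0^t e^{-\rho g(t-s)}(t-s)^{\alpha^{2}-1}\,\mathrm{d}s)^{1/g}$, and estimating the truncated integral by the corresponding Gamma integral produces precisely the constant $V=(\Gamma(\alpha^{2})/g^{\alpha^{2}})^{1/g}$ of \eqref{Psi}; the surviving factors are the $L^{q}$‑norms $(\int_0^t\Arrowvert y(s)\Arrowvert^{q}\,\mathrm{d}s)^{1/q}$ and $(\int_0^t\Arrowvert y(s-\tau)\Arrowvert^{q}\,\mathrm{d}s)^{1/q}$.

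I would then treat the delay integral by the substitution $r=s-\tau$ and split $\int_{-\tau}^{t-\tau}$ into its part over $[-\tau,0]$, on which $y=\omega$ is controlled by $\Arrowvert\omega\Arrowvert_{C}<\xi$, and its part over $[0,t-\tau]$, which I fold back into the running integral of $\Arrowvert y\Arrowvert^{q}$. The exponential weight carried through the shift is what yields the factors $e^{-q\tau}$ and $1-e^{-q\tau}$: the former collects the $A$‑ and $B$‑contributions into the single coefficient $\varPsi$ of \eqref{Psi}, while the latter, together with the bound on the initial data, produces the constant $\varPhi$ of \eqref{phi}. After estimating $\Arrowvert\omega(0)\Arrowvert,\Arrowvert\omega\Arrowvert_{C}\le\xi$ and $e^{-\rho t}\le 1$, everything collapses to a Gr\"onwall‑type inequality of the form $\Arrowvert y(t)\Arrowvert^{q}\le(3^{1/\alpha}+\varPhi)\xi^{q}+\int_0^t\kappa(s)\Arrowvert y(s)\Arrowvert^{q}\,\mathrm{d}s$, whose effective rate is the constant $\varPsi+q$ appearing in \eqref{cnd4}.

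Finally I would close the argument with the classical Bellman--Gr\"onwall inequality. Tuning the comparison function to the above inequality, the bound becomes $\Arrowvert y(t)\Arrowvert^{q}\le\xi^{q}R(t)$, where $R$ is the solution of the associated linear comparison equation $R'=(\varPsi+q)R-3^{1/\alpha}q$ with $R(0)=3^{1/\alpha}+\varPhi$; one checks that $R(t)$ is precisely the fraction under the $q$‑th root in \eqref{cnd4}. Taking $q$‑th roots and invoking the hypothesis \eqref{cnd4} then gives $\Arrowvert y(t)\Arrowvert<\varepsilon$ for every $t\in J$, which is the finite time stability of Definition~\ref{def1:stab}. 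I expect the genuinely delicate step to be the bookkeeping of the several exponential factors---the $e^{-\rho(t-s)}$ of the tempered kernel, the $e^{-\rho s}$ absorbed from the right‑hand side, and the delay‑shift weight---so that the constants assemble \emph{exactly} into $\varPsi$, $\varPhi$, $V$ and into the stated rate $\varPsi+q$; keeping the $\rho\le 1$ estimates consistent in direction at each stage is where an error would most easily creep in.
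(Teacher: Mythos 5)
Your overall skeleton (integral representation from Lemma~\ref{Lemma:1}, norm estimates via \eqref{condLipch}, Jensen's inequality with $n=3$ giving the prefactor $3^{q-1}=3^{1/\alpha}$, H\"older with the conjugate pair $g=1+\alpha$, $q=1+1/\alpha$, splitting of the delayed integral, and a final Gr\"onwall step) matches the paper's proof. However, the exponential bookkeeping you defer to the end is not merely ``delicate'': as you have set it up, it is inconsistent and cannot produce the constants of the theorem. The paper does \emph{not} keep the tempered kernel inside H\"older's inequality. It first discards all tempered exponentials via $e^{-\rho(t-s)}e^{-\rho s}\le 1$ and then inserts the artificial factor $1=e^{s}e^{-s}$, so that H\"older sends $e^{gs}$ with the kernel and $e^{-qs}$ with the state. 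This is what produces, on the one hand,
\begin{equation*}
\int_{0}^{t}(t-s)^{g(\alpha-1)}e^{gs}\,\mathrm{d}s
\le \frac{\Gamma(\alpha^{2})e^{gt}}{g^{\alpha^{2}}}=V^{g}e^{gt},
\end{equation*}
i.e.\ the factor $Ve^{t}$, and, on the other hand, the \emph{weighted} norms $\left(\int_{0}^{t}e^{-qs}\Arrowvert y(s)\Arrowvert^{q}\,\mathrm{d}s\right)^{1/q}$. In your version the kernel factor is $\left(\int_{0}^{t}e^{-\rho g(t-s)}(t-s)^{\alpha^{2}-1}\,\mathrm{d}s\right)^{1/g}\le\left(\Gamma(\alpha^{2})/(\rho g)^{\alpha^{2}}\right)^{1/g}$, which for $\rho<1$ is strictly \emph{larger} than $V$, so you cannot legitimately replace it by $V$; and your state factors $\left(\int_{0}^{t}\Arrowvert y(s)\Arrowvert^{q}\,\mathrm{d}s\right)^{1/q}$ carry no exponential weight at all. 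Consequently, the shift $r=s-\tau$ in your delay integral contributes only $\tau\Arrowvert\omega\Arrowvert_{C}^{q}$ plus a running integral: there is no source whatsoever for the factors $e^{-q\tau}$ and $1-e^{-q\tau}$, so the quantities $\varPsi$ and $\varPhi$ of \eqref{Psi}--\eqref{phi} cannot arise from the inequality you wrote down.

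The same gap breaks your last step. Your Gr\"onwall inequality has a constant free term $(3^{1/\alpha}+\varPhi)\xi^{q}$ and the unweighted unknown $\Arrowvert y(t)\Arrowvert^{q}$, so the classical lemma yields a bound of the pure exponential form $C\xi^{q}e^{\kappa t}$. Your observation that the fraction in \eqref{cnd4} solves $R'=(\varPsi+q)R-3^{1/\alpha}q$ with $R(0)=3^{1/\alpha}+\varPhi$ is correct, but a pure exponential is not that solution, so your inequality cannot ``collapse'' to \eqref{concl}. In the paper, Gr\"onwall is applied to the weighted function $e^{-qt}\Arrowvert y(t)\Arrowvert^{q}$, with the time-dependent free term $\left(3^{1/\alpha}e^{-qt}+\varPhi\right)\Arrowvert\omega\Arrowvert_{C}^{q}$ and constant rate $\varPsi$; multiplying back by $e^{qt}$ at the very end is exactly what turns the rate $\varPsi$ into $\varPsi+q$ and assembles the right-hand side of \eqref{concl}. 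The missing idea, then, is the $e^{s}e^{-s}$ insertion together with the passage to $e^{-qt}\Arrowvert y(t)\Arrowvert^{q}$; without it, your argument proves at best a finite time stability criterion with different, $\rho$-dependent constants, not Theorem~\ref{theoa} with condition \eqref{cnd4} as stated.
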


\begin{proof}
According to Lemma~\ref{Lemma:1}, 
the solution of system \eqref{system2} can be written as 
$$
y(t)=\omega(0)e^{-\rho t}
+\dfrac{1}{\Gamma(\alpha)}
\displaystyle \int_{0}^{t}e^{-\rho (t-s)}
e^{-\rho s}(t-s)^{\alpha-1}\left[  
A(s)y(s)+B(s)y(s-\tau)+f(s,y(s),y(s-\tau))\right] \, \mathrm{d}s.
$$ 
It follows that	
\begin{equation}
\Arrowvert y(t)\Arrowvert
\leq\Arrowvert\omega(0)\Arrowvert+\dfrac{1}{\Gamma(\alpha)}
\displaystyle\int_{0}^{t}(t-s)^{\alpha-1}\left[\lambda_{\max}(A)
\Arrowvert y(s)\Arrowvert+\lambda_{\max}(B)\Arrowvert y(s-\tau)\Arrowvert
+\Arrowvert f(s,y(s),y(s-\tau))\Arrowvert\right] \, \mathrm{d}s.
\end{equation}
Using condition \eqref{condLipch} with $f(s,0,0)=0$, it implies
\begin{equation*}
\Arrowvert y(t)\Arrowvert\leq\Arrowvert\omega(0)\Arrowvert
+\dfrac{1}{\Gamma(\alpha)}\displaystyle\int_{0}^{t}(t-s)^{\alpha-1}
\left[(\lambda_{\max}(A)+L_{f})\Arrowvert y(s)\Arrowvert
+(\lambda_{\max}(B)+L_{f})\Arrowvert y(s-\tau)\Arrowvert\right] \, \mathrm{d}s,
\end{equation*}
which yields
\begin{equation*}
\Arrowvert y(t)\Arrowvert
\leq\Arrowvert\omega(0)\Arrowvert+\dfrac{\lambda_{\max}(A)
+L_{f}}{\Gamma(\alpha)}\displaystyle\int_{0}^{t}(t-s)^{\alpha-1}e^{s}e^{-s}
\Arrowvert y(s)\Arrowvert\, \mathrm{d}s
+\dfrac{\lambda_{\max}(B)+L_{f}}{\Gamma(\alpha)}
\displaystyle\int_{0}^{t}(t-s)^{\alpha-1}e^{s}e^{-s}
\Arrowvert y(s-\tau)\Arrowvert \, \mathrm{d}s.
\end{equation*}
Applying H\"older's inequality (Lemma~\ref{Holder}) 
with $g=1+\alpha$ and $q=1+\dfrac{1}{\alpha}$, one obtains that
\begin{multline}
\label{imp2}	
\Arrowvert y(t)\Arrowvert
\leq\Arrowvert\omega(0)\Arrowvert+\dfrac{\lambda_{\max}(A)
+L_{f}}{\Gamma(\alpha)}\left(\displaystyle
\int_{0}^{t}(t-s)^{g(\alpha-1)}e^{gs}\, 
\mathrm{d}s\right)^{\dfrac{1}{g}} \times \left( 
\displaystyle\int_{0}^{t} e^{-qs}\Arrowvert y(s)\Arrowvert^{q}\, 
\mathrm{d}s\right)^{\dfrac{1}{q}}\\
+\dfrac{\lambda_{\max}(B)+L_{f}}{\Gamma(\alpha)}\left(
\displaystyle\int_{0}^{t}(t-s)^{g(\alpha-1)}e^{gs}\, 
\mathrm{d}s\right)^{\dfrac{1}{g}} \times \left( 
\displaystyle\int_{0}^{t} e^{-qs}\Arrowvert y(s-\tau)\Arrowvert^{q}\,
\mathrm{d}s\right)^{\dfrac{1}{q}}.	
\end{multline}
It is easy to show that
\begin{equation}
\label{imp3}
\displaystyle\int_{0}^{t}(t-s)^{g(\alpha-1)}e^{gs}\, \mathrm{d}s 
\leq \dfrac{e^{gt}\Gamma(g(\alpha-1)+1)}{g^{g(\alpha-1)+1}}
=\dfrac{\Gamma(\alpha^{2})e^{gt}}{g^{\alpha^{2}}}.
\end{equation}
Combining inequality \eqref{imp2} and relation \eqref{imp3}, one has
\begin{equation*}	
\begin{split}
\Arrowvert y(t)\Arrowvert
&\leq \Arrowvert\omega(0)\Arrowvert+\dfrac{(\lambda_{\max}(A)
+L_{f})Ve^{t}}{\Gamma(\alpha)} \left( 
\displaystyle\int_{0}^{t} e^{-qs}\Arrowvert y(s)\Arrowvert^{q}\, 
\mathrm{d}s\right)^{\dfrac{1}{q}}\\
&\qquad +\dfrac{(\lambda_{\max}(B)
+L_{f})Ve^{t}}{\Gamma(\alpha)} \left( \displaystyle\int_{0}^{t}  
e^{-qs}\Arrowvert y(s-\tau)\Arrowvert^{q}\, 
\mathrm{d}s\right)^{\dfrac{1}{q}}
\end{split}	
\end{equation*}
with $V$ given in \eqref{Psi}, which implies
\begin{equation}
\label{eq:pr:th1}
\begin{split}
\Arrowvert y(t)\Arrowvert
&\leq\Arrowvert\omega(0)\Arrowvert
+\dfrac{(\lambda_{\max}(A)+L_{f})Ve^{t}}{
\Gamma(\alpha)} \left( \displaystyle\int_{0}^{t} e^{-qs}
\Arrowvert y(s)\Arrowvert^{q}\, \mathrm{d}s\right)^{\dfrac{1}{q}}\\
&\qquad +\dfrac{(\lambda_{\max}(B)+L_{f})Ve^{t}}{\Gamma(\alpha)}
\left( \displaystyle\int_{-\tau}^{t}   
e^{-q(s+\tau)}\Arrowvert y(s)\Arrowvert^{q}\, \mathrm{d}s\right)^{\dfrac{1}{q}}.	
\end{split}	
\end{equation}
Now, by applying Jensen's inequality (Lemma~\ref{Jensen}) to \eqref{eq:pr:th1}, one gets
\begin{equation*}	
\begin{split}
\Arrowvert y(t)\Arrowvert^{q}
&\leq 3^{\frac{1}{\alpha}}\left[\Arrowvert\omega(0)\Arrowvert^{q}
+\dfrac{(\lambda_{\max}(A)+L_{f})^{q}V^{q}e^{qt}}{
\Gamma^{q}(\alpha)} \left( \displaystyle\int_{0}^{t} 
e^{-qs}\Arrowvert y(s)\Arrowvert^{q}\, \mathrm{d}s\right)\right.\\
&\qquad +\left.\dfrac{(\lambda_{\max}(B)+L_{f})^{q}V^{q}e^{qt}}{
\Gamma^{q}(\alpha)} \left( \displaystyle\int_{-\tau}^{t}       
e^{-q(s+\tau)}\Arrowvert y(s)\Arrowvert^{q}\, \mathrm{d}s\right)\right].	
\end{split}	
\end{equation*}
Hence, it follows that
$$
\Arrowvert y(t)\Arrowvert^{q}
\leq 3^{\frac{1}{\alpha}}\Arrowvert\omega(0)\Arrowvert^{q}
+\varPsi e^{qt} \displaystyle\int_{0}^{t} e^{-qs}\Arrowvert y(s)\Arrowvert^{q}\, \mathrm{d}s
+\dfrac{3^{\frac{1}{\alpha}}(\lambda_{\max}(B)+L_{f})^{q}V^{q}
e^{q(t-\tau)}}{\Gamma^{q}(\alpha)}
\displaystyle\int_{-\tau}^{0}       
e^{-qs}\Arrowvert y(s)\Arrowvert^{q}\, \mathrm{d}s,
$$
where $\varPsi$ is defined in \eqref{Psi}. This yields
$$
\Arrowvert y(t)\Arrowvert^{q}\leq3^{\frac{1}{\alpha}}\Arrowvert\omega\Arrowvert_{C}^{q}
+\varPsi e^{qt} \displaystyle\int_{0}^{t} 
e^{-qs}\Arrowvert y(s)\Arrowvert^{q}\, \mathrm{d}s
+e^{qt}\Arrowvert\omega\Arrowvert_{C}^{q}\varPhi 
$$
with $\varPhi$ given by \eqref{phi}. Hence,
$$
e^{-qt}\Arrowvert y(t)\Arrowvert^{q}\leq\left(
3^{\frac{1}{\alpha}}e^{-qs}+\varPhi\right)\Arrowvert\omega\Arrowvert_{C}^{q}
+\varPsi \displaystyle\int_{0}^{t} e^{-qs}\Arrowvert y(s)\Arrowvert^{q}\, \mathrm{d}s.
$$
Furthermore, by virtue of Gr\"onwall's inequality, one obtains that
$$
e^{-qt}\Arrowvert y(t)\Arrowvert^{q}\leq\left(
3^{\frac{1}{\alpha}}e^{-qs}+\varPhi\right)\Arrowvert\omega\Arrowvert_{C}^{q}
+\displaystyle\int_{0}^{t} \varPsi\left(3^{\frac{1}{\alpha}}e^{-qs}
+\varPhi\right)\Arrowvert\omega\Arrowvert_{C}^{q}e^{\varPsi(t-s)}\, \mathrm{d}s,
$$
which yields
\begin{equation}
\label{concl}
\Arrowvert y(t)\Arrowvert^{q}\leq
\dfrac{3^{\frac{1}{\alpha}}q+(3^{\frac{1}{\alpha}}\varPsi+q\varPhi+\varPsi\varPhi)
e^{(\varPsi+q) t}}{q+\varPsi}\Arrowvert\omega\Arrowvert_{C}^{q}.
\end{equation}
Consequently, from condition \eqref{cnd4} and inequality \eqref{concl}, 
we obtain the finite time stability of system \eqref{system2}.
\end{proof}

\begin{remark}
One notes that Theorem~\ref{theoa} gives only a sufficient condition that ensures 
the finite time stability of the time delay TFS \eqref{system2}. 
If this condition does not hold, we cannot conclude that \eqref{system2} is unstable.
\end{remark}

For the homogeneous case, we obtain from Theorem~\ref{theoa} the following result.

\begin{corollary}
Let $\xi, \varepsilon>0$ be given real numbers 
and consider $g=1+\alpha$ and $q=1+\dfrac{1}{\alpha}$. 
The homogeneous system \eqref{system1} is finite time stable 
with respect to $\{\xi, \varepsilon, J\}$, $\xi\leq\varepsilon$, if 
\begin{equation}
\label{cnd4c}
^{q}\sqrt{\dfrac{3^{\frac{1}{\alpha}}q
+(3^{\frac{1}{\alpha}}\varPsi+q\varPhi+\varPsi\varPhi)
e^{(\varPsi+q) t}}{q+\varPsi}}
\leq \dfrac{\varepsilon}{\xi}, \quad \forall t \in J,
\end{equation}
where
$$
\varPsi=\dfrac{3^{\frac{1}{\alpha}}\lambda_{\max}(A)^{q}
+\lambda_{\max}(B)^{q}e^{-q\tau})V^{q}}{\Gamma^{q}(\alpha)},
\quad 
V=\left(\dfrac{\Gamma(\alpha^{2})}{g^{\alpha^{2}}}\right)^{1/J}
$$
and
$$
\varPhi=\dfrac{3^{\frac{1}{\alpha}}\lambda_{\max}(B)^{q}(1
-e^{-\tau q})}{q\Gamma^{q}(\alpha)}V^{q}.
$$
\end{corollary}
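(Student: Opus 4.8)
The plan is to recognize the homogeneous system \eqref{system1} as exactly the special case of the nonhomogeneous system \eqref{system2} obtained by taking the nonlinearity $f \equiv 0$, and then to invoke Theorem~\ref{theoa} directly rather than reproving anything from scratch.

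First I would verify that the choice $f(t,y(t),y(t-\tau)) \equiv 0$ meets all the hypotheses of Theorem~\ref{theoa}. Trivially $f(t,0,0)=0$, and the Lipschitz condition \eqref{condLipch} holds with constant $L_f = 0$, since both sides of that inequality vanish identically for the zero function. Thus system \eqref{system1} is precisely system \eqref{system2} with $f=0$ and $L_f=0$, and Theorem~\ref{theoa} applies verbatim to it.

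Next I would carry out the substitution $L_f = 0$ in the constants \eqref{Psi} and \eqref{phi}. This replaces each factor $(\lambda_{\max}(A)+L_f)^q$ by $\lambda_{\max}(A)^q$ and each factor $(\lambda_{\max}(B)+L_f)^q$ by $\lambda_{\max}(B)^q$, while $V$, the exponential terms $e^{-q\tau}$ and $1-e^{-\tau q}$, and the $\Gamma^q(\alpha)$ factors are untouched. The resulting $\varPsi$ and $\varPhi$ coincide with those displayed in the corollary, and condition \eqref{cnd4} of Theorem~\ref{theoa} becomes exactly condition \eqref{cnd4c}.

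With these identifications the conclusion is immediate: under \eqref{cnd4c}, Theorem~\ref{theoa} yields the finite time stability of \eqref{system2}, hence of \eqref{system1}, with respect to $\{\xi,\varepsilon,J\}$. There is no genuine analytic obstacle in this argument; the only point that warrants explicit checking is that the Lipschitz bound \eqref{condLipch} remains valid in the degenerate case (which it does, with $L_f=0$), so that the \emph{statement} of Theorem~\ref{theoa} applies directly and the corollary follows as a clean specialization rather than by re-running its proof.
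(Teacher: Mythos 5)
Your proof is correct and is exactly the route the paper takes: the corollary is presented as an immediate specialization of Theorem~\ref{theoa} to $f\equiv 0$, for which the Lipschitz condition \eqref{condLipch} holds trivially with $L_f=0$, so that \eqref{cnd4} reduces to \eqref{cnd4c}. Note only that the corollary's displayed constants contain typographical slips (a missing opening parenthesis in $\varPsi$ and the exponent $1/J$ instead of $1/g$ in $V$); your substitution $L_f=0$ into \eqref{Psi} and \eqref{phi} recovers the intended formulas.
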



\subsection{Time delay independent criterion}

Now, based on the Bellman--Gr\"onwall approach, we shall formulate a sufficient  
condition that enables the TFS~\eqref{system2} trajectories to stay within 
a priori given sets. 

\begin{theorem}
\label{theob}
Given real numbers $\xi>0$ and $\varepsilon>0$, 
the system~\eqref{system2} is finite time stable 
with respect to $\{\xi, \varepsilon, J\}$, $\xi\leq\varepsilon$, 
if $f$ satisfies condition \eqref{condLipch} and
\begin{equation}
\label{cnd1} 
\left(1
+\dfrac{(\lambda_{S}
+2L_{f})t^{\alpha}}{\Gamma(\alpha+1)}\right)
E_{\alpha}\left((\lambda_{S}+2L_{f})t^{\alpha}\right)
\leq \dfrac{\varepsilon}{\xi}, \quad \forall t \in J,
\end{equation}
where 
$\lambda_{S}= \lambda_{\max}(A)+\lambda_{\max}(B)$.
\end{theorem}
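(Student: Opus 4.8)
The plan is to run the Bellman--Gr\"onwall argument: recast the delayed TFS as an integral inequality for $\|y(t)\|$, absorb the nonlinearity using the Lipschitz hypothesis \eqref{condLipch}, reduce the delayed term to an explicit forcing part plus a Gr\"onwall-integrable part, and then invoke the tempered Gr\"onwall--Mittag-Leffler estimate \eqref{GrenwalMittga}.

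First I would invoke the mild formulation of Lemma~\ref{Lemma:1} and repeat the opening estimates of the proof of Theorem~\ref{theoa}: since $e^{-\rho(t-s)}e^{-\rho s}=e^{-\rho t}\le 1$, taking norms, bounding $\|Ay(s)\|\le\lambda_{\max}(A)\|y(s)\|$ and $\|By(s-\tau)\|\le\lambda_{\max}(B)\|y(s-\tau)\|$, and using \eqref{condLipch} with $f(t,0,0)=0$, I would arrive at
$$
\|y(t)\|\le\|\omega\|_C+\frac{1}{\Gamma(\alpha)}\int_0^t e^{-\rho(t-s)}(t-s)^{\alpha-1}\Big[(\lambda_{\max}(A)+L_f)\|y(s)\|+(\lambda_{\max}(B)+L_f)\|y(s-\tau)\|\Big]\,\mathrm{d}s.
$$

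The core difficulty, and the place where the delay-\emph{independent} character must be produced, is the delayed term $\|y(s-\tau)\|$. A change of variables $\sigma=s-\tau$ is tempting but fails, because the resulting kernel $(t-\tau-\sigma)^{\alpha-1}$ is pointwise \emph{larger} than $(t-\sigma)^{\alpha-1}$ (the exponent $\alpha-1$ is negative), so it cannot be dominated by the undelayed convolution. Instead I would set $p(t)=\sup_{0\le\theta\le t}\|y(\theta)\|$ and bound, for $s\in[0,t]$, $\|y(s)\|\le p(s)$ and $\|y(s-\tau)\|\le\max\{\|\omega\|_C,p(s)\}\le\|\omega\|_C+p(s)$, the last step being the crude but $\tau$-free estimate that separates the history from the running trajectory. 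The constant $\|\omega\|_C$ from the delayed term, integrated against the kernel and simplified via $e^{-\rho(t-s)}\le 1$, contributes at most $(\lambda_{\max}(B)+L_f)\|\omega\|_C\,t^\alpha/\Gamma(\alpha+1)$; enlarging the coefficient to $\lambda_S+2L_f$ for uniformity yields the forcing term $\|\omega\|_C\big(1+(\lambda_S+2L_f)t^\alpha/\Gamma(\alpha+1)\big)$. Collecting the running parts gives
$$
\|y(t)\|\le\|\omega\|_C\left(1+\frac{(\lambda_S+2L_f)t^\alpha}{\Gamma(\alpha+1)}\right)+\frac{\lambda_S+2L_f}{\Gamma(\alpha)}\int_0^t e^{-\rho(t-s)}(t-s)^{\alpha-1}p(s)\,\mathrm{d}s.
$$

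Finally I would pass from $\|y(t)\|$ to $p(t)$ and close the estimate. Because $p$ is non-negative and non-decreasing, the tempered fractional integral $\int_0^t e^{-\rho(t-s)}(t-s)^{\alpha-1}p(s)\,\mathrm{d}s$ is non-decreasing in $t$ (substitute $r=t-s$), and the explicit forcing term is non-decreasing as well; hence the whole right-hand side is non-decreasing, so taking the supremum over $[0,t]$ on the left replaces $\|y(t)\|$ by $p(t)$ without altering the right-hand side. This places the inequality exactly under the hypothesis of Lemma~\ref{Gronwall} with $g=p$, the non-decreasing $f(t)=\|\omega\|_C\big(1+(\lambda_S+2L_f)t^\alpha/\Gamma(\alpha+1)\big)$, and the constant $h=(\lambda_S+2L_f)/\Gamma(\alpha)$, so that \eqref{GrenwalMittga} delivers $p(t)\le f(t)E_\alpha\big((\lambda_S+2L_f)t^\alpha\big)$. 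Since $\|y(t)\|\le p(t)$, combining with $\|\omega\|_C<\xi$ and the standing hypothesis \eqref{cnd1} gives $\|y(t)\|<\varepsilon$ for all $t\in J$, which is finite time stability in the sense of Definition~\ref{def1:stab}. The main obstacle throughout is the delayed term: obtaining a bound free of $\tau$ while preserving a convolution structure amenable to Lemma~\ref{Gronwall}, which is precisely what the history/running-supremum split and the subsequent monotonicity argument accomplish.
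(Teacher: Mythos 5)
Your proposal is correct and follows essentially the same route as the paper's own proof: the mild-solution estimate, the running supremum $\Upsilon(t)=\sup_{0\le\zeta\le t}\|y(\zeta)\|$ with the $\tau$-free bound $\|y(s-\tau)\|\le\Upsilon(s)+\|\omega\|_{C}$, the monotonicity argument to replace $\|y(t)\|$ by $\Upsilon(t)$ on the left, and then the tempered Gr\"onwall inequality of Lemma~\ref{Gronwall} with constant $h=(\lambda_{S}+2L_{f})/\Gamma(\alpha)$ to obtain the Mittag-Leffler bound. Your added remark on why a change of variables in the delayed term fails is a useful clarification, but the argument itself coincides with the paper's.
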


\begin{proof}
For all $t\in [0,T]$, the system \eqref{system2} admits a unique solution given by 	
$$
y(t)=\omega(0)e^{-\rho t}
+\dfrac{1}{\Gamma(\alpha)}\displaystyle \int_{0}^{t}
e^{-\rho (t-s)}e^{-\rho s}(t-s)^{\alpha-1}
\left[ A(s)y(s)+B(s)y(s-\tau)+ f(s,y(s),y(s-\tau))\right] \, \mathrm{d}s.
$$ 
Using the fact that $e^{-\rho s}\leq 1$ 
for all $s\in [0,t]$ and condition \eqref{condLipch} holds, then
\begin{equation}
\label{impa}
\Arrowvert y(t)\Arrowvert
\leq\Arrowvert\omega(0)\Arrowvert+\dfrac{1}{
\Gamma(\alpha)}\displaystyle \int_{0}^{t}e^{-\rho (t-s)}(t-s)^{\alpha-1}
\left[(\lambda_{\max}(A)+L_{f})\Arrowvert y(s)\Arrowvert
+\lambda_{\max}(B)+L_{f})\Arrowvert y(s-\tau)\Arrowvert\right] \, \mathrm{d}s.
\end{equation}
Setting $\Upsilon(t)=\underset{0\leq \zeta\leq t}{\sup}\Arrowvert y(\zeta)\Arrowvert$  
for all $t\in [0,T]$, one has  
\begin{equation}
\label{imp1}
\Arrowvert y(s-\tau)\Arrowvert \leq \Upsilon(s)+ \Arrowvert\omega\Arrowvert_{C}, 
\quad \forall s\in [0,t].
\end{equation}
Replacing \eqref{imp1} into inequality \eqref{impa}, it follows that
\begin{equation}
\label{impb}
\Arrowvert y(t)\Arrowvert
\leq\Arrowvert\omega(0)\Arrowvert+\dfrac{1}{\Gamma(\alpha)}
\displaystyle\int_{0}^{t}e^{-\rho(t-s)}(t-s)^{\alpha-1}(
\lambda_{S}+2L_{f})\left(\Upsilon(s)
+ \Arrowvert\omega\Arrowvert_{C}\right) \, \mathrm{d}s,
\end{equation}
which implies
\begin{equation}
\label{eq1}
\Arrowvert y(t)\Arrowvert\leq\Arrowvert\omega\Arrowvert_{C}
+\dfrac{(\lambda_{S}+2L_{f})t^{\alpha}}{\Gamma(\alpha+1)}
\Arrowvert\omega\Arrowvert_{C}
+\dfrac{(\lambda_{S}+2L_{f})}{\Gamma(\alpha)}
\displaystyle \int_{0}^{t}e^{-\rho(t-s)}(t-s)^{\alpha-1}
\Upsilon(s) \, \mathrm{d}s.
\end{equation}
Using the change of variable $x=t-s$, one obtains
\begin{equation}
\label{det1}
\Arrowvert y(t)\Arrowvert\leq\Arrowvert\omega\Arrowvert_{C}
+\dfrac{(\lambda_{S}+2L_{f})t^{\alpha}}{\Gamma(\alpha+1)}
\Arrowvert\omega\Arrowvert_{C}
+\dfrac{(\lambda_{S}+2L_{f})}{\Gamma(\alpha)}
\displaystyle \int_{0}^{t}e^{-\rho x} x^{\alpha-1}
\Upsilon(t-x) \, \mathrm{d}x.
\end{equation}
Also, by taking $t=\zeta$ in \eqref{det1} with $\zeta \in [0,t]$ 
and using $\zeta^{\alpha}\leq t^{\alpha}$, we get
\begin{equation}
\label{eq2}
\Arrowvert y(\zeta)\Arrowvert
\leq\left[1+\dfrac{(\lambda_{S}+2L_{f})t^{\alpha}}{
\Gamma(\alpha+1)}\right]\Arrowvert\omega\Arrowvert_{C}
+\dfrac{(\lambda_{S}+2L_{f})}{\Gamma(\alpha)}
\displaystyle \int_{0}^{\zeta}e^{-\rho x} x^{\alpha-1}
\Upsilon(\zeta-x) \, \mathrm{d}x.
\end{equation}
Since the function $\Upsilon$ is nonnegative, 
it follows that $\displaystyle \int_{0}^{t}e^{-\rho x} x^{\alpha-1}
\Upsilon(t-x) \, \mathrm{d}x$ is an increasing function 
with respect to $t\geq0$, which implies that 
$\displaystyle \int_{0}^{\zeta}e^{-\rho x} x^{\alpha-1}
\Upsilon(\zeta-x)  \, \mathrm{d}x\leq\displaystyle 
\int_{0}^{t}e^{-\rho x} x^{\alpha-1}
\Upsilon(t-x) \, \mathrm{d}x$ and
\begin{equation*}
\Arrowvert y(\zeta)\Arrowvert\leq\left[1+\dfrac{(\lambda_{S}+2L_{f})
t^{\alpha}}{\Gamma(\alpha+1)}\right]
\Arrowvert\omega\Arrowvert_{C}+\dfrac{(\lambda_{S}+2L_{f})}{\Gamma(\alpha)}
\displaystyle \int_{0}^{t}e^{-\rho x} x^{\alpha-1}
\Upsilon(t-x)\, \mathrm{d}x.
\end{equation*}
It follows that
\begin{equation*}
\Upsilon(t)=\underset{0\leq \zeta\leq t}{\sup}\Arrowvert y(\zeta)
\Arrowvert\leq\left[1+\dfrac{(\lambda_{S}+2L_{f})
t^{\alpha}}{\Gamma(\alpha+1)}\right]\Arrowvert\omega\Arrowvert_{C}
+\dfrac{(\lambda_{S}+2L_{f})}{\Gamma(\alpha)}
\displaystyle \int_{0}^{t}e^{-\rho(t-s)}(t-s)^{\alpha-1}
\Upsilon(s)\, \mathrm{d}s.
\end{equation*}
Now, let 
$f(t)=\left[1+\dfrac{(\lambda_{S}+2L_{f})t^{\alpha}}{
\Gamma(\alpha+1)}\right]\Arrowvert\omega\Arrowvert_{C}$,
which is a nondecreasing function. By applying Lemma~\ref{Gronwall} 
with $h(t)=\dfrac{(\lambda_{S}+2L_{f})}{\Gamma(\alpha)}$, 
we get
$$
\Arrowvert y(t)\Arrowvert\leq\Upsilon(t)
\leq \Arrowvert\omega\Arrowvert_{C} \left[1
+\dfrac{(\lambda_{S}+2L_{f})t^{\alpha}}{\Gamma(\alpha+1)}
\right]E_{\alpha}\left(
(\lambda_{S}+2L_{f}) t^{\alpha}\right).
$$
Then, by virtue of \eqref{Cnd1} and \eqref{cnd1}, one deduces that 
$$
\Arrowvert y(t) \Arrowvert < \varepsilon, 
\quad \forall t \in J=[0,T],
$$
which proves the finite time stability of 
the nonhomogeneous  TFS \eqref{system2}.
\end{proof}

\begin{remark}
The condition \eqref{cnd1} can be written, in equivalent way, as follows:	
\begin{equation*}
\left(1+\dfrac{(\Arrowvert A \Arrowvert + \Arrowvert B
\Arrowvert+2L_{f})t^{\alpha}}{\Gamma(\alpha+1)}\right)
E_{\alpha}\left((\Arrowvert A \Arrowvert 
+ \Arrowvert B \Arrowvert+2L_{f}) t^{\alpha}\right)
\leq \dfrac{\varepsilon}{\xi}, \quad \forall t \in J.
\end{equation*}	
\end{remark}

In the homogeneous case, we obtain from Theorem~\ref{theob} the following result.

\begin{corollary}
\label{Cor1}
Given real numbers $\xi>0$ and $\varepsilon>0$, the homogeneous 
system \eqref{system1} is finite time stable with respect to 
$\{\xi, \varepsilon, J\}$, $\xi\leq\varepsilon$, if 
\begin{equation}
\label{cnd1c} 
\left(1+\dfrac{\lambda_{S}t^{\alpha}}{\Gamma(\alpha+1)}\right)
E_{\alpha}\left(\lambda_{S} t^{\alpha}\right)
\leq \dfrac{\varepsilon}{\xi}, \quad \forall t \in J.
\end{equation}	
\end{corollary}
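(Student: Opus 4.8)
The plan is to recognize the homogeneous system \eqref{system1} as the special instance of the nonhomogeneous system \eqref{system2} in which the nonlinear forcing term vanishes identically, and then to invoke Theorem~\ref{theob} directly. Concretely, setting $f\equiv 0$ in \eqref{system2} reproduces \eqref{system1} verbatim, so it suffices to verify that the hypotheses of Theorem~\ref{theob} are met and to read off what its conclusion becomes in this case.

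First I would check that the zero function satisfies the Lipschitz condition \eqref{condLipch}. This is immediate: for $f\equiv 0$ the left-hand side of \eqref{condLipch} is identically zero, so the inequality holds for every nonnegative constant; in particular we may take the Lipschitz constant to be $L_{f}=0$. With this choice, $f$ also trivially meets the standing assumption $f(t,0,0)=0$, so all the requirements placed on the nonlinearity in Theorem~\ref{theob} are fulfilled.

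Next I would substitute $L_{f}=0$ into the sufficient condition \eqref{cnd1}. Since $\lambda_{S}+2L_{f}$ collapses to $\lambda_{S}=\lambda_{\max}(A)+\lambda_{\max}(B)$, the expression
$$
\left(1+\dfrac{(\lambda_{S}+2L_{f})t^{\alpha}}{\Gamma(\alpha+1)}\right)
E_{\alpha}\left((\lambda_{S}+2L_{f})t^{\alpha}\right)
$$
becomes exactly the left-hand side of \eqref{cnd1c}. Therefore, whenever \eqref{cnd1c} holds for all $t\in J$, condition \eqref{cnd1} holds with $L_{f}=0$, and Theorem~\ref{theob} guarantees that \eqref{system1} is finite time stable with respect to $\{\xi,\varepsilon,J\}$.

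There is essentially no obstacle to overcome, since the corollary is a direct specialization of the theorem; the only point requiring (trivial) care is confirming that admitting $L_{f}=0$ is consistent with the Lipschitz hypothesis, which it is. Alternatively, one could reprove the bound from scratch by repeating the Bellman--Gr\"onwall argument of Theorem~\ref{theob} with the $f$-terms deleted from \eqref{impa} onward; this merely recovers the same estimate with $L_{f}$ absent and is hence redundant.
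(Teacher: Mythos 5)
Your proposal is correct and coincides with the paper's own (implicit) derivation: the paper obtains Corollary~\ref{Cor1} precisely by specializing Theorem~\ref{theob} to $f\equiv 0$, so that $L_{f}=0$ and condition \eqref{cnd1} reduces to \eqref{cnd1c}. Your remark that the degenerate Lipschitz constant $L_{f}=0$ is harmless (the Gr\"onwall argument of Theorem~\ref{theob} goes through verbatim with the $f$-terms deleted) addresses the only point of possible concern, namely the strict inequality $L_{f}>0$ in \eqref{condLipch}.
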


\begin{remark} 
From Corollary~\ref{Cor1}, if we let $\rho=0$ in system~\eqref{system1}, 
then one retrieves the condition 
\begin{equation*}
\left(1+\dfrac{\lambda_{S}t^{\alpha}}{\Gamma(\alpha+1)}\right)
E_{\alpha}\left(\lambda_{S}t^{\alpha}\right)
\leq \dfrac{\varepsilon}{\xi}, 
\quad \forall t \in J,
\end{equation*}
for the finite time stability of the Caputo 
fractional order time-delay system
\begin{equation*}
\left\{
\begin{array}{ll}
{}^C \!D_{0}^{\alpha}y(t)=Ay(t)+By(t-\tau), & t\in[0,T],\\
y(t)=\omega(t),  & t\in[-\tau,0],
\end{array}
\right.
\end{equation*}
where ${}^C \!D_{0}^{\alpha}$ is the Caputo fractional derivative 
of order $\alpha$. This result is proved in \cite{Lazarevi}.
\end{remark}


\section{Illustrative Examples}
\label{sec:4}

In this section, we present two expository examples 
in order to illustrate our previous results.

Our first example gives a situation where Theorem~\ref{theoa} 
allows us to conclude that the given TFS
is finite time stable while Theorem~\ref{theob} fails.

\begin{example}
\label{ex1} 
Consider the nonhomogeneous tempered fractional system with time delay
\begin{equation}
\label{system2a}
\left\{
\begin{array}{ll}
{}^T\! D_{0}^{0.3,0.8}y(t)=e^{-0.8}t(Ay(t)
+By(t-\tau)+2\sin y(t)-3\sin y(t-\tau)), & t\in[0,3],\\
y(t)=[0\quad 0]^{T},  & t\in[-0.2,0],
\end{array}
\right.
\end{equation}
where
$$
A=\begin{pmatrix}
-2&0\\
0&-2\\
\end{pmatrix},~B=\begin{pmatrix}
3&-4\\
0&0\\
\end{pmatrix}~ \text{ and }~ y=\begin{pmatrix}
y_{1}\\
y_{2}\\	
\end{pmatrix}\in \mathbb{R}^{2}.
$$
According to system \eqref{system2}, one has  $\alpha=0.3$, 
$\rho=0.8$, $\tau=0.2$, $T=3$, $\omega(t)=[0\quad 0]^{T}$ and 
$$
f(t,y(t),y(t-\tau))=2\sin y(t)-3\sin y(t-\tau).
$$
For all  $y,z : [-0.2, 3]\longrightarrow\mathbb{R}^{2}$, one has 
\begin{equation*}	
\begin{split}
\Arrowvert f(t,y(t),y(t-\tau))- f(t,z(t),z(t-\tau))  \Arrowvert 
&=\Arrowvert 2(\sin y(t)-\sin z(t))-3(\sin y(t-\tau)-\sin z(t-\tau))\Arrowvert \\
&\leq 2\left[\Arrowvert\sin y(t)-\sin z(t)\Arrowvert 
+ 3\Arrowvert \sin y(t-\tau)-\sin z(t-\tau)\Arrowvert\right]\\
&\leq
3\left[\Arrowvert y(t)- z(t)\Arrowvert + \Arrowvert y(t-\tau)- z(t-\tau)\Arrowvert
\right],\quad t\in[0,3].
\end{split}	
\end{equation*}
It follows that the nonlinear function $f$ satisfies 
the  condition \eqref{condLipch} with $L_{f}=3$ and $f(t,0,0)=0$.
Also, one needs to check the finite time stability of system \eqref{system2a} 
with regard to $J=[0,3]$, $\xi=0.01$, and $\varepsilon=0.6$.
From system \eqref{system2a}, one gets	
$$\Arrowvert \omega \Arrowvert_{C}	<0.01,$$	
$$\lambda_{\max}(A)= 2, \lambda_{\max}(B)=5~\text{ and }~\lambda_{S}=7.$$	
Hence, numerically, we have $q=4.3333$, $\varPsi=0.4945$,
$\varPhi=0.1201$ and 
\begin{equation*}
C_{1}(t)=^{q}\sqrt{8.0658+4.1088e^{(4.8279) t}}\leq 60
=\dfrac{\varepsilon}{\xi}, \quad \forall t\in [0,3]. 
\end{equation*}
Therefore, condition \eqref{cnd4} holds, 
as it is illustrated in Figure~\ref{fig1}. Then, from Theorem~\ref{theoa}, 
we deduce that system \eqref{system2a} is finite time stable with respect to 
$\{\xi=0.01, \varepsilon=0.6, J=[0,3]\}$. However, we cannot arrive 
at the same conclusion using Theorem~\ref{theob}, since
$$ 
C_{2}(t)= \left(1+14.4847\, t^{0.3}\right)
E_{0.3}\left(13 \, t^{0.3}\right)\leq 60,
$$
is not satisfied for all $t \in [0,3]$, as shown in Figure~\ref{fig2}, 
which means that condition \eqref{cnd1} does not hold. 
\begin{figure}[H]
\centering
\includegraphics[scale=0.6]{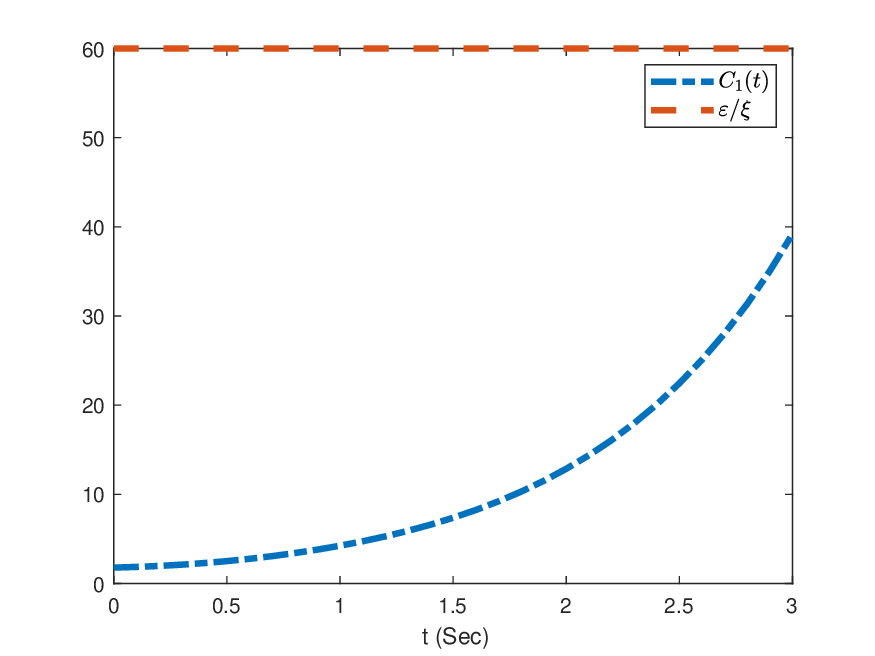}
\caption{Condition \eqref{cnd4}, over $[0, 3]$, 
for $\alpha=0.3$, $\rho= 0.8$, 
$\xi=0.01$ and $\varepsilon=0.6$.}
\label{fig1}
\end{figure} 
\begin{figure}[H]
\centering
\includegraphics[scale=0.6]{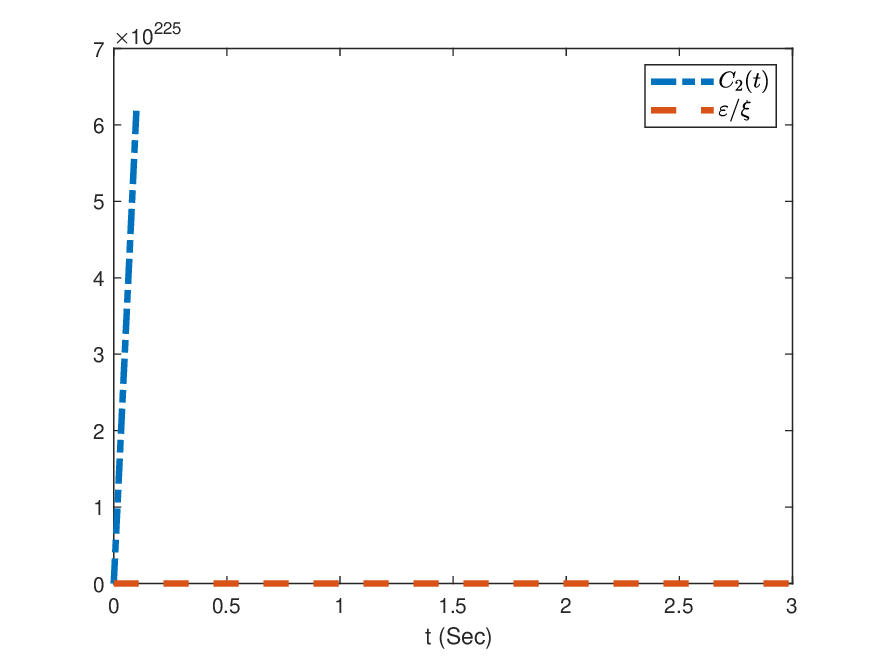}
\caption{Condition \eqref{cnd1}, over $[0, 3]$, for $\alpha=0.3$, 
$\rho= 0.8$, $\xi=0.01$ and $\varepsilon=0.6$.}
\label{fig2}
\end{figure} 
\end{example}


In contrast with Example~\ref{ex1}, now we consider a problem where 
Theorem~\ref{theob} allows us to deduce the finite time stability 
of the system while Theorem~\ref{theoa} does not.

\begin{example} 
Let us consider the following nonhomogeneous TFS time delay system:
\begin{equation}
\label{system2exp}
\left\{
\begin{array}{ll}
{}^T\! D_{0}^{0.5,0.5}y(t)
=e^{-0.5 t}(Ay(t)+By(t-\tau)+0.03(y(t)+\sin(y(t-\tau)), 
& t\in[0,4],\\
y(t)=[0.01\cos(\pi t)\quad 0.01]^{T},  
& t\in[-0.2,0],
\end{array}
\right.
\end{equation}
with 
$$
A=\begin{pmatrix}
0&0.2\\
-0.15&0\\
\end{pmatrix},
\quad
B=
\begin{pmatrix}
-0.1&0\\
0&-0.09\\
\end{pmatrix}
\text{ and } 
y=
\begin{pmatrix}
y_{1}\\
y_{2}\\	
\end{pmatrix}
\in \mathbb{R}^{2}.
$$
According to system \eqref{system2}, one has $\alpha=\rho=0.5$, $\tau=0.2$, $T=4$,
$\omega(t)=[0.01\cos(\pi t)\quad 0.01]^{T}$ and 
$$
f(t,y(t),y(t-\tau))=0.03\left(y(t)+\sin(y(t-\tau))\right).
$$
One has 
\begin{equation*}	
\begin{split}
\Arrowvert f(t,y(t),y(t-\tau))- f(t,z(t),z(t-\tau))  \Arrowvert 
&\leq 0.03\Arrowvert y(t)- z(t)\Arrowvert+\Arrowvert
\sin y(t-\tau)-\sin z(t-\tau)\Arrowvert \\
&\leq 0.03\left[\Arrowvert y(t)- z(t)\Arrowvert 
+ \Arrowvert y(t-\tau)- z(t-\tau)\Arrowvert \right], \quad t\in[0,4],
\end{split}	
\end{equation*}
for all $y,z : [-0.2, 4]\longrightarrow\mathbb{R}^{2}$. It follows that condition 
\eqref{condLipch} holds with $L_{f}=0.03$ and $f(t,0,0)=0$.
	
We need to check the finite time stability of system \eqref{system2exp} 
with regard to  $J=[0,4]$, $\xi=0.02$, $\varepsilon=0.2$.
From system \eqref{system2exp}, it follows that
$$
\Arrowvert \omega \Arrowvert_{C} < 0.02
$$
and 
$$
\lambda_{\max}(A)= 0.2, \lambda_{\max}(B)=0.1,\lambda_{S}=0.3.
$$
On the other hand, for $m\in \mathbb{N}$, one has
$$
\Gamma\left(m+\dfrac{1}{2}\right)
=\dfrac{\sqrt{(\pi)\Gamma(2m+1)}}{2^{2m}\Gamma(m+1)}
=\dfrac{\sqrt{(\pi)(2n)!}}{2^{2n}m!}.
$$
Therefore, for $m=1$, we have
$$
\Gamma\left(1+\dfrac{1}{2}\right)
=\dfrac{\sqrt{(\pi)2!}}{2^{2}1!}=0.886.
$$
Moreover, by using the numeric computing environment
\textsf{MATLAB}, we obtain that 
\begin{equation*}
C_{2}(t)=\left(1+0.4063\,t^{0.5}\right)
E_{0.5}\left(0.36\,t^{0.5}\right)
\leq 10=\dfrac{\varepsilon}{\xi}, 
\quad \forall t \in [0, 4],
\end{equation*}
which means that condition \eqref{cnd1} holds, 
as it is shown in Figure~\ref{fig3}.
Then, from Theorem~\ref{theob}, we deduce that system 
\eqref{system2exp} is finite time stable with respect to 
$\{\xi=0.02, \varepsilon=0.2, J=[0,4]\}$.
	
However, by using Theorem~\ref{theoa}, 
we cannot conclude 
that the system \eqref{system2exp}
is finite time stable. Indeed, for 
$[3.5, 4]\subset [0, 4]$ one has $\varPsi=0.0075$, 
$\varPhi=1.8486\times 10^{-4}$ and
\begin{equation*}
C_{1}(t)=^{3}\sqrt{
8.9776+0.0226e^{(3.0075) t}}> 10, 
\end{equation*}
which means that condition \eqref{cnd4} does not hold as it is 
illustrated in Figure~\ref{fig4}.
\begin{figure}[H]
\centering
\includegraphics[scale=0.6]{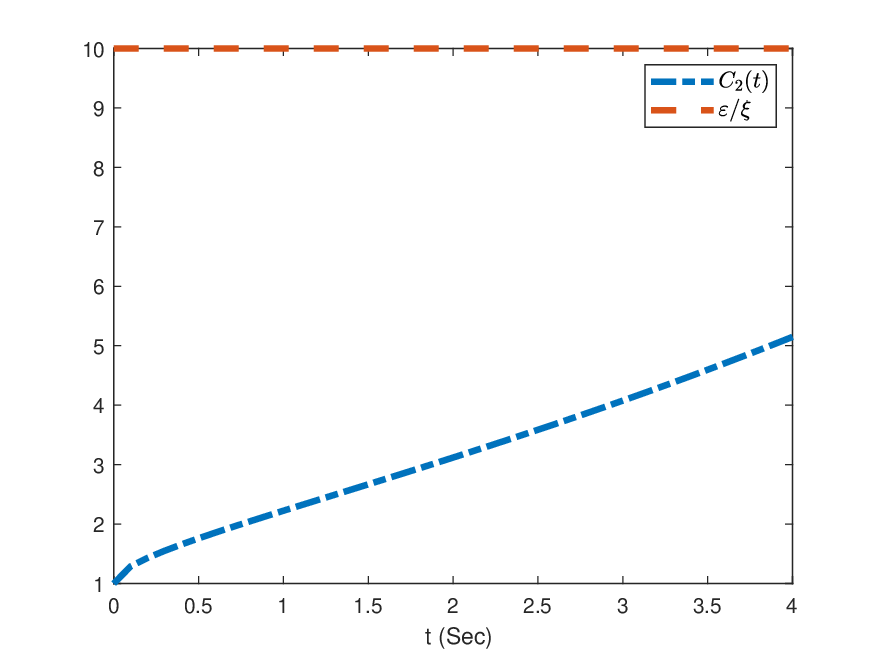}
\caption{Condition \eqref{cnd1}, over $[0, 4]$, 
for $\alpha=\rho= 0.5$, $\xi=0.02$ and $\varepsilon=0.2$.}
\label{fig3}
\end{figure} 
\begin{figure}[H]
\centering
\includegraphics[scale=0.6]{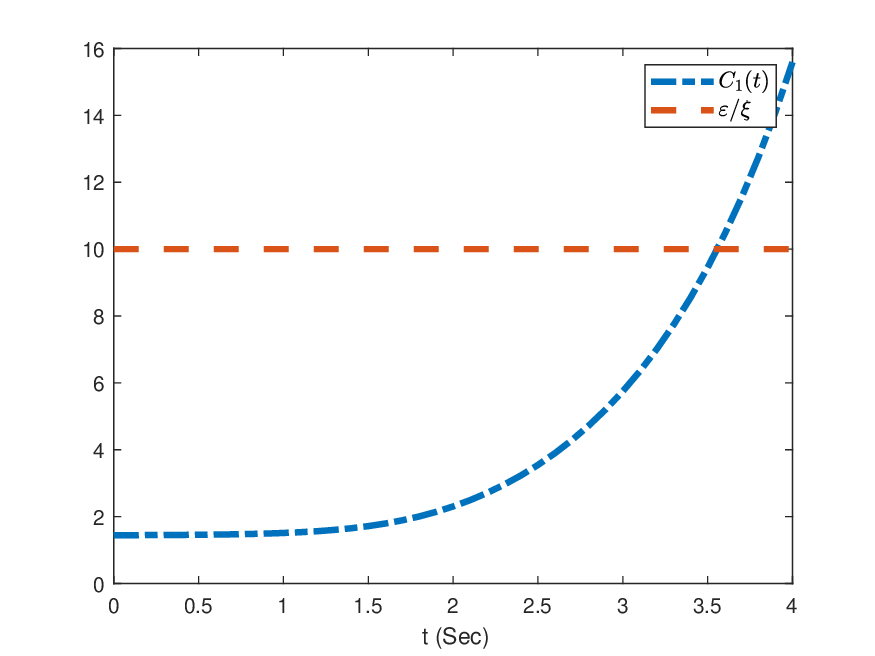}
\caption{Condition \eqref{cnd4}, over $[0, 4]$, 
for $\alpha=\rho= 0.5$, $\xi=0.02$ and $\varepsilon=0.2$.}
\label{fig4}
\end{figure}
\end{example}

Our examples show that Theorems~\ref{theoa} and \ref{theob}  
are different and both useful, 
depending on the systems under study.


\section{Conclusion}
\label{sec:5}

For some engineering systems whose operation is time limited 
and should be done within prescribed bounds on system variables, 
the only meaningful stability concept is finite time stability. 
Since the concept of a change given in terms of the tempered
fractional derivative (TFD) is more appropriate for some specific applications,
in this work we provided two finite time stability test procedures 
for fractional differential equations with time delays 
involving the TFD. One stability criterion
depends on the time delay while the second one 
is delay independent. We used mainly two different approaches
for nonhomogeneous time delay TFSs over a finite time interval:
(i) one is based on H\"older's and Jensen's inequalities; 
(ii) the second one on Bellman--Gr\"onwall method using 
the tempered fractional Gr\"onwall inequality. 
The effectiveness of the proposed procedures was illustrated 
through two numerical examples,
showing that the obtained criteria
are different and relevant. Our developed stability results 
may be applied to investigate the stability, over a finite time, 
of different mathematical delayed models, e.g., 
neural networks with a bounded activation function 
or tuberculosis epidemic models.

The generalized kernel idea started 
with Boltzmann in 1874 \cite{Boltzmann}
and can be seen, e.g., in \cite{MR2404728}.
As future work, we plan to investigate 
necessary conditions for the finite time stability 
of TFSs \eqref{system2} and also to analyze 
their finite time stabilization,
developing numerical methods to approximate the solution 
of the considered problems and to study the stability
of fractional delayed systems with more general types of kernels.


\section*{Acknowledgments}

The authors would like to thank three anonymous reviewers 
and an Associate Editor for their recommendations and suggestions, 
that helped them to improve the initial submitted manuscript.


\section*{Funding}

Zitane and Torres are supported by The Center for Research and
Development in Mathematics and Applications (CIDMA)
through the Portuguese Foundation for Science and Technology 
(FCT -- Funda\c{c}\~{a}o para a Ci\^{e}ncia e a Tecnologia),
projects UIDB/04106/2020 and UIDP/04106/2020.


\section*{Disclosure} 

The authors report there are no competing interests to declare. 


\section*{Data Availability} 

Not applicable.



\end{document}